\documentclass[a4paper]{amsart}

\usepackage[utf8]{inputenc}
\usepackage[english]{babel}

\usepackage[T1]{fontenc}
\usepackage{lmodern}  

\usepackage{my-math-symbol}
\usepackage{my-cleveref}
\usepackage{my-equation-numbering}
\usepackage{comment}
\usepackage{tikz}
\usetikzlibrary{cd}

\DeclareMathOperator{\pr}{pr}
\newcommand{\FS}{\omega_{\mathrm{FS}}}
\newcommand{\relFS}{\omega_{\mathrm{FS}}^{\mathrm{rel}}}

\usepackage[non-sorted-cites,initials,alphabetic,nobysame]{amsrefs}

\AtBeginDocument{%
	\def\MR#1{}
}

\title{Chern classes of Levi non-degenerate CR manifolds}
\author{Yuya Takeuchi}
\address{Division of Mathematics \\ Institute of Pure and Applied Sciences \\ University of Tsukuba
	\\ 1- 1- 1 Tennodai, Tsukuba, Ibaraki 305-8571 Japan}
\email{ytakeuchi@math.tsukuba.ac.jp, yuya.takeuchi.math@gmail.com}

\subjclass[2020]{Primary 32V05; Secondary 53C55, 57R20}

\keywords{Levi non-degenerate CR manifold, Chern class, pseudo-\Kahler manifold, circle bundle}

\thanks{This work was supported by JSPS KAKENHI Grant Number JP25K17247.}

\begin{document}

\begin{abstract}
	We construct closed Levi non-degenerate CR manifolds with non-vanishing Chern monomials.
	For every $n \geq 2$ and $\lceil n / 4 \rceil \leq m \leq \lfloor n / 2 \rfloor$,
	there exists a $(2 n + 1)$-dimensional example of signature
	$(\lfloor n / 2 \rfloor, \lceil n / 2 \rceil)$
	on which all Chern monomials with positive indices at most $m$
	and total weight at most $n - m$ are non-zero in cohomology with real coefficients.
	Taking $m = \lceil n / 4 \rceil$ gives examples with
	$c_{1}^{\lfloor 3 n / 4 \rfloor} \neq 0$.
	For $n = 4$ or $n \geq 6$,
	the latter examples show that the Chern-class vanishing theorem for strictly pseudoconvex CR manifolds
	does not extend to the Levi non-degenerate case.
	In these dimensions,
	their underlying manifolds admit no strictly pseudoconvex CR structures.
	The construction uses circle bundles over products of integral pseudo-\Kahler manifolds.
\end{abstract}

\maketitle

\section{Introduction}
\label{section:introduction}

Let $(M, T^{1, 0} M)$ be a closed strictly pseudoconvex CR manifold of dimension $2 n + 1 \geq 5$.
It is known that
$M$ can be realized as the boundary of a strictly pseudoconvex domain
in a complex manifold~\cites{Boutet_de_Monvel1975,Harvey-Lawson1975,Lempert1995}.
This imposes restrictions on its topology.
Bungart~\cite{Bungart1992} and Popescu-Pampu~\cite{Popescu-Pampu2008} proved that
the cup product
\begin{equation}
	H^{i_{1}}(M; \bbR) \otimes \dots \otimes H^{i_{k}}(M; \bbR)
	\longrightarrow H^{i_{1} + \dots + i_{k}}(M; \bbR)
\end{equation}
vanishes if $i_{l} \leq n - 1$ and $i_{1} + \dots + i_{k} \geq n + 2$.
Moreover,
the author~\cite{Takeuchi2020-Chern} proved that
\begin{equation}
	c_{r_{1}}(T^{1, 0} M) \cup \dotsm \cup c_{r_{k}}(T^{1, 0} M) = 0
\end{equation}
in $H^{2 (r_{1} + \dots + r_{k})}(M; \bbR)$
if $2 (r_{1} + \dots + r_{k}) \geq n + 2$.
Note that these results are trivial when $n = 1$.

It is natural to ask whether these vanishing results hold for Levi non-degenerate CR manifolds.
In this paper,
we construct examples with $c_{1}^{\lfloor 3 n / 4 \rfloor} \neq 0$ in cohomology with real coefficients,
which gives counterexamples for $n = 4$ or $n \geq 6$.
More generally,
our main result gives simultaneous non-vanishing of Chern monomials on a single CR manifold.
For a multi-index $r = (r_{1}, \dots , r_{k})$ of positive integers,
we call $\abs{r} \coloneqq r_{1} + \dots + r_{k}$ its total weight.

\begin{theorem}
\label{thm:non-trivial-higher-Chern-class}
	Fix $n \geq 2$ and $\lceil n / 4 \rceil \leq m \leq \lfloor n / 2 \rfloor$.
	Then there exists a $(2 n + 1)$-dimensional closed Levi non-degenerate CR manifold $(S, T^{1, 0} S)$
	of signature $(\lfloor n / 2 \rfloor, \lceil n / 2 \rceil)$ such that
	\begin{equation}
		c_{r_{1}}(T^{1, 0} S) \cup \dotsm \cup c_{r_{k}}(T^{1, 0} S) \neq 0
	\end{equation}
	in $H^{2 \abs{r}}(S; \bbR)$
	for any integer $k \geq 1$ and any $(r_{1}, \dots , r_{k}) \in \{1, \dots , m\}^{k}$
	satisfying $\abs{r} \leq n - m$.
\end{theorem}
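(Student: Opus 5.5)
The plan is to take $S$ to be a circle bundle over a product of integral pseudo-\Kahler manifolds, as the abstract indicates. Let $(X, \omega)$ be a compact complex manifold of complex dimension $n$ with a pseudo-\Kahler form $\omega$ of signature $(\lfloor n/2 \rfloor, \lceil n/2 \rceil)$, and suppose $[\omega]/2\pi$ is an integral class. Let $L \to X$ be a Hermitian line bundle whose Chern connection has curvature $-i\omega$. The unit circle bundle $S \subset L^{*}$ (or $L$, according to sign conventions) then carries a CR structure: $T^{1,0}S$ is the horizontal lift of $T^{1,0}X$, and the connection form $\theta$ satisfies $d\theta = \pi^{*}\omega$. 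The Levi form is therefore $\pi^{*}\omega$ restricted to $T^{1,0}X$, which is non-degenerate with the required signature. Moreover $T^{1,0}S \cong \pi^{*}T^{1,0}X$, so $c_{r}(T^{1,0}S) = \pi^{*}c_{r}(X)$. The Gysin sequence
\[
H^{j-2}(X;\bbR) \xrightarrow{\ \cup [\omega]\ } H^{j}(X;\bbR) \xrightarrow{\ \pi^{*}\ } H^{j}(S;\bbR)
\]
then reduces the theorem to a statement on $X$. A Chern monomial $c_{r}(X)$ with $(r_{1}, \dots, r_{k}) \in \{1, \dots, m\}^{k}$ and $\abs{r} \leq n-m$ must not lie in $[\omega] \cup H^{2\abs{r}-2}(X;\bbR)$.

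For $X$ we take a product $X = Y \times Z$. The factor $Z$ is a product of $m$ complex surfaces, or some low-dimensional pieces, each carrying an integral pseudo-\Kahler form of signature $(1,1)$, or a \Kahler form of definite sign. The signs are arranged so that the total signature is $(\lfloor n/2 \rfloor, \lceil n/2 \rceil)$ and $\omega = \sum_{j} \epsilon_{j} \omega_{j}$ with $\epsilon_{j} = \pm 1$. A natural concrete choice is $X = \prod_{j} (\mathbb{CP}^{1})^{a_{j}}$ or $\mathbb{CP}^{d_{j}}$ factors with alternating signs of Fubini--Study forms $\FS$. Products of projective spaces have explicit cohomology rings $\bbR[h_{1}, \dots, h_{p}]/(h_{j}^{d_{j}+1})$, and their Chern classes are $c(X) = \prod_{j} (1+h_{j})^{d_{j}+1}$. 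The class is $[\omega] = \sum_{j} \epsilon_{j} h_{j}$, so non-divisibility by $[\omega]$ becomes a purely algebraic question in a truncated polynomial ring. The constraint $\lceil n/4 \rceil \leq m \leq \lfloor n/2 \rfloor$ should fix the choice: take $m$ projective spaces (or their blocks), half with each sign, with dimensions summing to $n$. The bound $r_{l} \leq m$ controls which $h_{j}$ genuinely appear. The bound $\abs{r} \leq n-m$ keeps the degree below the range where the quotient ring $H^{*}(X)/([\omega])$ vanishes. That quotient is $H^{*}$ of a "hyperplane-like" section, with top degree $2(n-1)$.

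The key algebraic step, and the main obstacle, is proving that $c_{r}(X) \notin ([\omega])$ in the required degree range. I would first try to evaluate against a suitable test class. Find a class $\beta \in H^{2(n-\abs{r})}(X;\bbR)$ with $[\omega] \cup \beta' = 0$ for the relevant $\beta'$; equivalently, use a class $\gamma$ annihilated by $[\omega]$ with $\langle c_{r}(X) \cup \gamma, [X] \rangle \neq 0$. Take $\gamma$ to be a product of factors $(h_{j} - h_{j'})$ pairing a positive factor with a negative one, which are killed by $\epsilon_{j} h_{j} + \epsilon_{j'} h_{j'}$ after suitable truncation. Making $\gamma$ annihilated by all of $[\omega]$ requires a careful choice, for instance $\gamma = \prod (h_{j}^{d} - \dots)$ built from the identity $(a+b)\sum (-1)^{i} a^{i} b^{d-i} = a^{d+1} \pm b^{d+1} = 0$. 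Positivity of all Chern coefficients then gives a nonzero pairing via a sign-coherent expansion. The inequalities on $m$ and $\abs{r}$ should be exactly what is needed for $\gamma$ to exist in complementary degree with nonzero pairing. If the product of projective spaces is too rigid, I would fall back on products of higher-dimensional \Kahler factors and adjust degrees. The remaining steps are routine checks: integrality, signature bookkeeping, and the identification $T^{1,0}S \cong \pi^{*}T^{1,0}X$.
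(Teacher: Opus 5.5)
\textbf{There is a genuine gap: the bases you propose cannot work when $m<n/2$.} Your reduction to the base is the same as the paper's: $T^{1,0}S\cong p^{\ast}T^{1,0}X$, the Gysin sequence, and a test class killed by $[\omega]$ with non-zero pairing, i.e.\ \cref{lem:chern-non-vanishing-criterion}. The failure lies in the bases themselves, and no cleverer choice of $\gamma$ will fix it.

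On $X=\prod_{j}\cps^{d_j}$, the substitution $h_j\mapsto\varepsilon_j h_j$ is a graded ring automorphism of $\bbR[h_1,\dots,h_p]/(h_1^{d_1+1},\dots,h_p^{d_p+1})$. It carries the K\"ahler class $\sum_j h_j$ to $[\omega]=\sum_j\varepsilon_j h_j$, so $[\omega]$ inherits the hard Lefschetz property. The same holds for your fallback of signed K\"ahler forms on products of K\"ahler manifolds; there one uses the automorphism $x\mapsto\sqrt{-1}^{\deg x}x$ on the negatively signed factors, over $\bbC$. Consequently $[\omega]\cup\colon H^{j-2}(X;\bbR)\to H^{j}(X;\bbR)$ is surjective for $j\geq n+1$. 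So every class of degree greater than $n$ pulls back to zero on $S$. Equivalently, no non-zero class of degree less than $n$ is annihilated by $[\omega]$.

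Yet for $m<n/2$ the theorem asserts, for instance, $c_1(T^{1,0}S)^{n-m}\neq 0$ in degree $2(n-m)>n$. This range includes every admissible $m$ when $n$ is odd, and $m=\lceil n/4\rceil$ for all $n\geq 3$. Concretely, take $n=3$, $m=1$, $X=\cps^1\times\cps^2$ and $[\omega]=h_1-h_2$. The classes $(h_1-h_2)h_1=-h_1h_2$ and $(h_1-h_2)h_2=h_1h_2-h_2^2$ span $H^4(X;\bbR)$, so $c_1(T^{1,0}S)^2=0$.

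In particular, your heuristic that $H^{\ast}(X)/([\omega])$ reaches degree $2(n-1)$ is false; it vanishes above degree $n$. Likewise, the identity $(a-b)\sum_i a^i b^{d-i}=a^{d+1}-b^{d+1}$ only produces annihilated classes in the middle degree. That handles only the case $n$ even and $m=n/2$, where the paper indeed uses $(\cps^1\times\cps^1)^{n/2}$.

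\textbf{The missing idea} is a base on which hard Lefschetz fails in exactly the useful way. One needs blocks $Y_d$ of dimension $d=3,4$ carrying a degree-two class $\alpha_d$ with $[\omega_d]\cup[\alpha_d]=0$ and $\int_{Y_d}[\alpha_d]\cup c_1(T^{1,0}Y_d)^{d-1}\neq 0$. This is impossible whenever $[\omega_d]$ satisfies hard Lefschetz, since then $[\omega_d]\cup$ is injective on $H^2$ for $d\geq 3$.

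The paper obtains these blocks as $\pi\colon\bbP(\bbC\oplus L)\to B$, where $B$ is a complex $2$-torus or a complex $3$-dimensional nilmanifold, both with $c(T^{1,0}B)=1$. It starts from integral closed $(1,1)$-forms $\omega_0,\omega_1$ with $[\omega_0]\cup[\omega_1]=0$ and $\omega_t$ pseudo-K\"ahler for all $t\in[0,1]$. Then:
\begin{itemize}
\item $\omega=-\relFS+\pi^{\ast}\omega_t$ is pseudo-K\"ahler.
\item $\alpha=\omega-\pi^{\ast}(\omega_0+\omega_1)$ satisfies $[\omega]\cup[\alpha]=-c_1(\calO_{Y}(-1))\cup c_1(Q)=-c_2(\pi^{\ast}(\bbC\oplus L))=0$.
\item $c(T^{1,0}Y)=1+c_1(T^{1,0}Y)$ makes the pairing computable.
\end{itemize}

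One then takes $m$ blocks of dimensions $n_j\in\{2,3,4\}$ with $\sum_j n_j=n$, and sets $\alpha=\prod_j\pr_j^{\ast}\alpha_{n_j}$. This gives an annihilated class of degree $2m$. The pairing with $c_{r_1}\cup\dots\cup c_{r_k}\cup c_1^{\,n-m-|r|}$ equals $\prod_j A_{n_j}$ times the coefficient of $t_1^{r_1}\cdots t_k^{r_k}$ in $\prod_j e_{n_j-1}(t_1,\dots,t_k)$. The cyclic labelling argument shows this coefficient is positive, and that is where $r_l\leq m$ and $2m\leq n\leq 4m$ enter. Nothing in your proposal produces such a low-degree annihilated class.
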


Applying \cref{thm:non-trivial-higher-Chern-class}
with $m = \lceil n / 4 \rceil$ and $r_{1} = \dots = r_{n - m} = 1$ gives the following consequence.

\begin{corollary}
\label{thm:non-trivial-first-Chern-class}
	For each $n \geq 2$,
	there exists a $(2 n + 1)$-dimensional closed Levi non-degenerate CR manifold $(S, T^{1, 0} S)$
	of signature $(\lfloor n / 2 \rfloor, \lceil n / 2 \rceil)$ such that
	\begin{equation}
		c_{1}(T^{1, 0} S)^{\lfloor 3 n / 4 \rfloor} \neq 0
	\end{equation}
	in $H^{2 \lfloor 3 n / 4 \rfloor}(S; \bbR)$.
	In particular,
	the cup product
	\begin{equation}
		H^{2}(S; \bbR)^{\otimes \lfloor 3 n / 4 \rfloor} \longrightarrow H^{2 \lfloor 3 n / 4 \rfloor}(S; \bbR)
	\end{equation}
	is non-trivial.
\end{corollary}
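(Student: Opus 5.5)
The corollary is a direct specialization of \cref{thm:non-trivial-higher-Chern-class}, so the plan is to check that the chosen parameters are admissible and then read off both claims.

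\textbf{Step 1: admissibility of $m$.} Take $m = \lceil n/4 \rceil$. The lower bound $m \geq \lceil n/4 \rceil$ holds trivially. The only thing to verify is the upper bound
\[
\lceil n/4 \rceil \leq \lfloor n/2 \rfloor \quad \text{for all } n \geq 2 .
\]
Write $n = 4q + s$ with $0 \leq s \leq 3$ and compare the two sides case by case. This is immediate once $n \geq 2$. For $n = 1$ it would fail, since $\lceil 1/4 \rceil = 1 > 0 = \lfloor 1/2 \rfloor$, which explains the hypothesis $n \geq 2$.

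\textbf{Step 2: the identity $n - \lceil n/4 \rceil = \lfloor 3n/4 \rfloor$.} This follows from $\lceil x \rceil = -\lfloor -x \rfloor$:
\[
n - \lceil n/4 \rceil = n + \lfloor -n/4 \rfloor = \lfloor n - n/4 \rfloor = \lfloor 3n/4 \rfloor .
\]

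\textbf{Step 3: apply the theorem.} Take $k = n - m = \lfloor 3n/4 \rfloor \geq 1$ and $r_1 = \dots = r_k = 1$. Then each $r_l$ lies in $\{1, \dots, m\}$ because $m \geq 1$, and the total weight is $\abs{r} = k = n - m$. Applying \cref{thm:non-trivial-higher-Chern-class} therefore produces a closed Levi non-degenerate CR manifold $S$ of dimension $2n+1$ and signature $(\lfloor n/2 \rfloor, \lceil n/2 \rceil)$ with
\[
c_1(T^{1,0}S)^{\lfloor 3n/4 \rfloor} \neq 0 \quad \text{in } H^{2\lfloor 3n/4 \rfloor}(S;\bbR).
\]

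\textbf{Step 4: non-triviality of the cup product.} This is automatic from Step 3. The real class $c_1(T^{1,0}S) \in H^2(S;\bbR)$ has a non-zero $\lfloor 3n/4 \rfloor$-fold cup power, so the multiplication map $H^2(S;\bbR)^{\otimes \lfloor 3n/4 \rfloor} \to H^{2\lfloor 3n/4 \rfloor}(S;\bbR)$ is non-zero on the element $c_1 \otimes \dots \otimes c_1$.

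There is no real obstacle here; the only point needing care is the elementary floor and ceiling bookkeeping in Steps 1 and 2.
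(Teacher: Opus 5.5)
Your proposal is correct and is the paper's own argument: the corollary is obtained by specializing the main theorem to $m = \lceil n/4 \rceil$ and $r_1 = \dots = r_{n-m} = 1$, and then using $n - \lceil n/4 \rceil = \lfloor 3n/4 \rfloor$. Your checks that $\lceil n/4 \rceil \leq \lfloor n/2 \rfloor$ for $n \geq 2$ and that $c_1 \otimes \dots \otimes c_1$ witnesses the non-trivial cup product supply details the paper leaves implicit.
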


For $n = 4$ or $n \geq 6$,
we have $2 \lfloor 3 n / 4 \rfloor \geq n + 2$.
Thus \cref{thm:non-trivial-first-Chern-class} shows that the above Chern-class vanishing statement fails
when strict pseudoconvexity is replaced by Levi non-degeneracy.
Moreover,
the non-trivial cup product in that corollary depends only on the underlying smooth manifold.
Since $2 \leq n - 1$ in these dimensions,
the cup-product vanishing theorem implies that this manifold admits no strictly pseudoconvex CR structures,
or equivalently, no holomorphically fillable contact structures.

Our examples are circle bundles over closed integral pseudo-\Kahler manifolds $(Y, \omega)$.
The CR structure of such a circle bundle is isomorphic,
as a complex vector bundle,
to the pullback of the holomorphic tangent bundle of its base.
We use the Gysin exact sequence to detect non-vanishing Chern monomials on the total space.
More precisely,
we construct a closed real $(m, m)$-form $\alpha$ on the base
whose cohomology class is annihilated by $[\omega]$
and pairs non-trivially with the relevant Chern monomials
multiplied by a suitable power of the first Chern class.
The required bases are products of copies of three building blocks of complex dimensions $2$, $3$, and $4$.
The three- and four-dimensional blocks arise from
projective bundles over a complex torus and a nilmanifold,
respectively.

The paper is organized as follows.
In \cref{section:circle-bundles},
we recall the circle-bundle construction and formulate a non-vanishing criterion using the Gysin exact sequence.
In \cref{section:projective-bundles},
we develop the projective-bundle construction used in our examples.
In \cref{section:building-blocks},
we construct the three low-dimensional building blocks.
Finally,
in \cref{section:Chern-classes},
we combine these blocks and prove the main theorem.

Unless otherwise stated, Chern classes and cup products are considered in cohomology with real coefficients.
For a closed real two-form $\omega$,
the notation $[\omega] \in H^{2}(Y; \bbZ)$ means that
its de Rham class lies in the image of the natural map $H^{2}(Y; \bbZ) \to H^{2}(Y; \bbR)$.

\section{Circle bundles over pseudo-\Kahler manifolds}
\label{section:circle-bundles}

Let $M$ be an orientable smooth $(2 n + 1)$-dimensional manifold without boundary.
A \emph{CR structure} on $M$ is a rank $n$ complex subbundle $T^{1, 0} M$
of the complexified tangent bundle $T M \otimes \bbC$ such that
\begin{equation}
	T^{1, 0} M \cap T^{0, 1} M = 0, \qquad
	\comm{\Gamma(T^{1 ,0} M)}{\Gamma(T^{1, 0} M)} \subset \Gamma(T^{1, 0} M),
\end{equation}
where $T^{0, 1} M$ is the complex conjugate of $T^{1, 0} M$ in $T M \otimes \bbC$.
A typical example of a CR manifold is a real hypersurface $M$
in an $(n + 1)$-dimensional complex manifold $X$;
$M$ carries the canonical CR structure
\begin{equation}
	T^{1, 0} M
	\coloneqq T^{1, 0} X |_{M} \cap (T M \otimes \bbC).
\end{equation}

Let $\theta$ be a nowhere-vanishing real one-form on $M$
that annihilates $T^{1, 0} M$.
The \emph{Levi form} $\calL_{\theta}$ with respect to $\theta$ is the Hermitian form
on $T^{1,0} M$ defined by
\begin{equation}
	\calL_{\theta}(Z, W)
	\coloneqq - \sqrt{- 1} \, d \theta(Z, \ovW), \qquad Z, W \in T^{1, 0} M.
\end{equation}
A CR structure $T^{1, 0} M$ is said to be \emph{Levi non-degenerate}
if the Levi form is non-degenerate for some $\theta$;
such a $\theta$ is called a \emph{contact form}.
For a fixed contact form $\theta$,
the \emph{signature} of the Levi form at a point is the pair $(p, q)$,
where $p$ and $q$ are the numbers of positive and negative eigenvalues,
respectively.
Non-degeneracy implies that this pair is constant on each connected component of $M$.
For any positive smooth function $f$,
we have $\calL_{f \theta} = f \calL_{\theta}$,
so the signature is unchanged under positive rescaling of $\theta$.
Replacing $\theta$ by $- \theta$ interchanges $p$ and $q$.
Throughout this paper,
we say that $(M, T^{1, 0} M)$ has signature $(p, q)$
if it admits a contact form whose Levi form has
$p$ positive and $q$ negative eigenvalues at every point.

An important example of a Levi non-degenerate CR manifold
is the circle bundle associated with an integral pseudo-\Kahler manifold.
The underlying contact structure is obtained by the Boothby--Wang construction~\cite{Boothby-Wang1958};
see also \cite{Geiges2008}*{Section 7.2}.
We recall its realization using a Hermitian holomorphic line bundle,
which also equips the circle bundle with a natural CR structure.
Let $Y$ be an $n$-dimensional complex manifold
and let $\omega$ be a pseudo-\Kahler form on $Y$ of signature $(p, q)$ with $[\omega] \in H^{2}(Y; \bbZ)$.
Then there exists a Hermitian holomorphic line bundle $(L, h)$ over $Y$
such that
\begin{equation}
	\omega = - \frac{\sqrt{- 1}}{2 \pi} \Theta_{h} = \frac{1}{2 \pi} d d^{c} \log h,
\end{equation}
where $d^{c} \coloneqq (\sqrt{- 1} / 2)(\delb - \partial)$.
Consider the circle bundle
\begin{equation}
	S = \Set{ v \in L | h(v, v) = 1}
\end{equation}
over $Y$.
This is a real hypersurface in the total space of $L$
and has the canonical CR structure $T^{1, 0} S$.
The one-form $\eta \coloneqq d^{c} \log h |_{S}$
is a connection one-form on the principal $S^{1}$-bundle $p \colon S \to Y$
and satisfies $d \eta = 2 \pi p^{\ast} \omega$.
Moreover,
the natural CR structure $T^{1, 0} S$ coincides with
the horizontal lift of $T^{1, 0} Y$ with respect to $\eta$.
It follows from the definition of $\eta$ that
\begin{equation}
	\calL_{\eta}(Z, W)
	= - \sqrt{- 1} \, d \eta(Z, \ovW)
	= - 2 \pi \sqrt{- 1} \omega (p_{\ast} Z, p_{\ast} \ovW).
\end{equation}
This implies that $\calL_{\eta}$ is non-degenerate and of signature $(p, q)$.
In particular,
$(S, T^{1, 0} S)$ is a Levi non-degenerate CR manifold of signature $(p, q)$,
and $\eta$ is a contact form on $S$.

We now assume that $Y$ is closed.
The identification with the horizontal lift gives an isomorphism of complex vector bundles
\begin{equation}
	T^{1, 0} S \cong p^{\ast} T^{1, 0} Y,
\end{equation}
which implies
\begin{equation}
	c(T^{1, 0} S) = p^{\ast} c(T^{1, 0} Y).
\end{equation}
The Gysin exact sequence contains the segment
\begin{equation}
\label{eq:Gysin-exact-sequence}
	H^{2 k}(Y; \bbR) \xrightarrow{[\omega] \cup} H^{2 k + 2}(Y; \bbR)
	\xrightarrow{p^{\ast}} H^{2 k + 2}(S; \bbR).
\end{equation}
Consequently,
a class in $H^{2 k + 2}(Y; \bbR)$ has a non-zero pullback to $S$
if and only if it does not lie in the image of multiplication by $[\omega]$.
We will use the following sufficient condition.

\begin{lemma}
\label{lem:chern-non-vanishing-criterion}
	Let $\gamma \in H^{2 K}(Y; \bbR)$, where $1 \leq K \leq n$.
	Assume that there exists a class $\beta \in H^{2 n - 2 K}(Y; \bbR)$ such that
	\begin{equation}
		[\omega] \cup \beta = 0,
		\qquad
		\int_{Y} \beta \cup \gamma \neq 0.
	\end{equation}
	Then $p^{\ast} \gamma \neq 0$ in $H^{2 K}(S; \bbR)$.
\end{lemma}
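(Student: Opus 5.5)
We argue by contradiction, using the Gysin segment \eqref{eq:Gysin-exact-sequence} with $k = K - 1$. Suppose $p^{\ast} \gamma = 0$ in $H^{2 K}(S; \bbR)$. Exactness at $H^{2 K}(Y; \bbR)$ then gives a class $\delta \in H^{2 K - 2}(Y; \bbR)$ with
\begin{equation}
	\gamma = [\omega] \cup \delta .
\end{equation}
This is where $K \geq 1$ is needed, so that $H^{2K-2}(Y;\bbR)$ is the relevant term of the sequence; for $K = 1$ it is $H^{0}(Y; \bbR)$, which is fine.

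Next we pair with $\beta$ and use that the cup product on even-degree classes is graded commutative with no sign:
\begin{equation}
	\int_{Y} \beta \cup \gamma
	= \int_{Y} \beta \cup [\omega] \cup \delta
	= \int_{Y} ([\omega] \cup \beta) \cup \delta
	= 0 .
\end{equation}
Here we used $[\omega] \cup \beta = 0$, and the degrees add to $(2n - 2K) + 2 + (2K - 2) = 2n = \dim_{\bbR} Y$, so the integral over the closed oriented manifold $Y$ makes sense. (We orient $Y$ by its complex structure.) This contradicts $\int_{Y} \beta \cup \gamma \neq 0$, so $p^{\ast} \gamma \neq 0$.

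The only point needing care is the Gysin sequence itself: the Euler class of the principal $S^{1}$-bundle $p \colon S \to Y$ must be, up to sign, the real image of $[\omega]$. This holds because $d\eta = 2\pi p^{\ast}\omega$ for the connection form $\eta$, whose curvature represents the Euler class. A sign change would not affect the image of $[\omega] \cup$, so the argument goes through. Everything else is formal.
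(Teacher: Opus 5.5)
Your proposal is correct and follows the paper's argument exactly: assume $p^{\ast}\gamma = 0$, use the Gysin segment to write $\gamma = [\omega] \cup \gamma'$, then pair with $\beta$ and use $[\omega] \cup \beta = 0$ to get a contradiction. Your extra remarks do no harm. The paper states the Gysin segment directly, so your justification that the Euler class is $\pm[\omega]$ via $d\eta = 2\pi p^{\ast}\omega$ is supplementary rather than required.
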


\begin{proof}
	Suppose to the contrary that $p^{\ast} \gamma = 0$.
	Then \eqref{eq:Gysin-exact-sequence} gives
	$\gamma = [\omega] \cup \gamma^{\prime}$ for some $\gamma^{\prime} \in H^{2 K - 2}(Y; \bbR)$.
	Hence
	\begin{equation}
		\int_{Y} \beta \cup \gamma
		= \int_{Y} \beta \cup [\omega] \cup \gamma^{\prime}
		= 0,
	\end{equation}
	which is a contradiction.
\end{proof}

\section{Pseudo-\Kahler structures on projective bundles}
\label{section:projective-bundles}

In this section,
we give a projective-bundle construction that produces the three- and four-dimensional building blocks.
The key point is to obtain both an integral pseudo-\Kahler form
and a cohomology class that is annihilated by cup product with the class of this form
but has a non-zero Chern-class pairing.

\begin{proposition}
\label{prop:projective-bundle}
	Let $B$ be a closed complex manifold of dimension $k \geq 1$,
	and let $\omega_{0}$ and $\omega_{1}$ be closed real $(1, 1)$-forms on $B$.
	Assume that $c(T^{1, 0} B) = 1$,
	$[\omega_{0}], [\omega_{1}] \in H^{2}(B; \bbZ)$,
	$[\omega_{0}] \cup [\omega_{1}] = 0$,
	and
	\begin{equation}
		\omega_{t} \coloneqq (1 - t) \omega_{0} + t \omega_{1}
	\end{equation}
	is a pseudo-\Kahler form of signature $(p, q)$ for every $t \in \clcl{0}{1}$.
	Then there exist a closed pseudo-\Kahler manifold $(Y, \omega)$ of signature $(p, q + 1)$
	and a closed real $(1, 1)$-form $\alpha$ on $Y$ such that
	$c(T^{1, 0} Y) = 1 + c_{1}(T^{1, 0} Y)$,
	$[\omega] \in H^{2}(Y; \bbZ)$,
	$[\omega] \cup [\alpha] = 0$,
	and
	\begin{equation}
		\int_{Y} [\alpha] \cup c_{1}(T^{1, 0} Y)^{k} \neq 0.
	\end{equation}
	In particular,
	\begin{equation}
		c_{1}(T^{1, 0} Y)^{k} \notin \Im \rbra*{[\omega] \cup \colon H^{2 k - 2}(Y; \bbR) \longrightarrow H^{2 k}(Y; \bbR)}.
	\end{equation}
\end{proposition}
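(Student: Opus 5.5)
The plan is to take $Y \coloneqq \bbP(E)$, the projectivization of a rank-two holomorphic vector bundle $E = L_{0} \oplus L_{1}$ over $B$, with $\pi \colon Y \to B$. Here $(L_{i}, h_{i})$ are Hermitian holomorphic line bundles chosen, as in \cref{section:circle-bundles}, so that their Chern forms are $\pm \omega_{i}$; this is possible because $[\omega_{i}] \in H^{2}(B; \bbZ)$ and $\omega_{i}$ is a closed real $(1,1)$-form. The signs will be fixed so that the formulas below come out as stated. We equip $\calO_{\bbP(E)}(1)$ with the metric induced by $h_{0} \oplus h_{1}$, let $\Omega$ be its Chern form, and set $\xi \coloneqq [\Omega] = c_{1}(\calO_{\bbP(E)}(1))$. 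We then take
\begin{equation}
	\omega \coloneqq - \Omega,
	\qquad
	a \coloneqq c_{1}(E) = - ([\omega_{0}] + [\omega_{1}]).
\end{equation}
In particular $[\omega] = -\xi$ is integral.

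The first step is the local computation that justifies this choice. In a local frame $v_{0}, v_{1}$ we write a point of a fibre as $[z_{0} : z_{1}]$ and introduce the fibrewise function
\begin{equation}
	t \coloneqq \frac{h_{1}(z_{1}v_{1})}{h_{0}(z_{0}v_{0}) + h_{1}(z_{1}v_{1})} \in \clcl{0}{1}.
\end{equation}
The standard computation of $\frac{1}{2\pi} d d^{c} \log$ of the induced metric should give
\begin{equation}
	\Omega = \relFS - \pi^{\ast}\rbra*{(1 - t) \omega_{0} + t \omega_{1}},
\end{equation}
up to the orthogonal splitting into vertical and horizontal parts determined by the Chern connection. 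Here $\relFS$ is positive on the fibres and vanishes on horizontal vectors. Consequently $\omega = -\Omega$ restricts to $\omega_{t}$ on horizontal vectors, which has signature $(p, q)$ by hypothesis since $t \in \clcl{0}{1}$, and it is negative definite on the vertical line. Hence $\omega$ is a pseudo-K\"ahler form of signature $(p, q + 1)$. \textbf{This step is the main obstacle}: one must check that there are no mixed terms after using the horizontal/vertical splitting, and that all sign conventions, including the choice between $E$ and $E^{\ast}$, are consistent. I would first try a direct computation in local coordinates at a point where the frames are normal to second order.

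The second step is topological. By Leray--Hirsch,
\begin{equation}
	H^{\ast}(Y; \bbR) = H^{\ast}(B; \bbR)[\xi] \big/ \rbra*{\xi^{2} + a\xi + c_{2}(E)},
\end{equation}
with signs adapted to the convention, and $c_{2}(E) = \pm [\omega_{0}] \cup [\omega_{1}] = 0$. From $T^{1,0}Y = \pi^{\ast}T^{1,0}B \oplus T_{Y/B}$ (as smooth bundles), $c(T^{1,0}B) = 1$, and the Euler sequence, we get
\begin{equation}
	c(T^{1,0}Y) = 1 + 2\xi + a,
\end{equation}
so $c(T^{1,0}Y) = 1 + c_{1}(T^{1,0}Y)$.

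The last step chooses $\alpha$ and evaluates the pairing. Take
\begin{equation}
	\alpha \coloneqq \Omega - \pi^{\ast}(\omega_{0} + \omega_{1}),
\end{equation}
a closed real $(1,1)$-form with $[\alpha] = \xi + a$. Then $[\omega] \cup [\alpha] = -\xi(\xi + a) = 0$. Using $\xi[\alpha] = 0$ we get $[\alpha]^{2} = [\alpha] a$, hence $[\alpha]^{j} = [\alpha] a^{j-1}$. Writing $c_{1}(T^{1,0}Y) = 2[\alpha] - a$ and expanding with the binomial theorem,
\begin{equation}
	[\alpha] \cup c_{1}(T^{1,0}Y)^{k}
	= [\alpha] a^{k} \sum_{j} \binom{k}{j} 2^{j} (-1)^{k-j}
	= [\alpha] a^{k}.
\end{equation}
Integrating over the fibres (where $\int \xi = 1$) gives
\begin{equation}
	\int_{Y} [\alpha] \cup c_{1}(T^{1,0}Y)^{k} = \int_{B} a^{k}.
\end{equation}
Since $[\omega_{0}] \cup [\omega_{1}] = 0$, this equals
\begin{equation}
	(-1)^{k}\rbra*{\int_{B} \omega_{0}^{k} + \int_{B} \omega_{1}^{k}}.
\end{equation}
Each integral is nonzero with the same sign $(-1)^{q}$, because $\omega_{i}^{k}$ is $(-1)^{q}$ times a volume form. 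Hence the pairing is nonzero. The final assertion of the proposition then follows exactly as in the proof of \cref{lem:chern-non-vanishing-criterion}.
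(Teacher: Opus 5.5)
Your construction is the paper's in a symmetric form. We have $\bbP(L_{0}\oplus L_{1})\cong\bbP(\bbC\oplus L_{1}\otimes L_{0}^{-1})$, and twisting the tautological bundle by $\pi^{\ast}L_{0}$ produces the paper's $+\pi^{\ast}\omega_{0}$. So your $\omega=-\Omega$ is exactly the paper's $-\relFS+\pi^{\ast}\omega_{t}$. The local computation you flagged does work: with $\eta_{w}=dw+wh^{-1}\partial h$ there are no mixed terms.

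The real problem is the sign bookkeeping you deferred. The formulas you state cannot all hold, and with your explicit $\alpha$ the key identity $[\omega]\cup[\alpha]=0$ is false. Once $\Omega=\relFS-(1-t)\pi^{\ast}\omega_{0}-t\pi^{\ast}\omega_{1}$, the relation is forced. Integrate $\Omega^{2}$ over the fibres, using $(\relFS)^{2}=0$ and $\int_{\cps^{1}}t\,\FS=1/2$; this gives $\pi_{\ast}(\xi^{2})=-[\omega_{0}+\omega_{1}]$. Together with $c_{2}(E)=0$, you get $\xi^{2}+\sigma\xi=0$ for $\sigma\coloneqq\pi^{\ast}[\omega_{0}+\omega_{1}]$.

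So the class $a$ with $\xi(\xi+a)=0$ is $+\sigma$, not $-\sigma$, in either convention:
\begin{itemize}
\item For lines in $E$ you need $c_{1}(L_{i},h_{i})=\omega_{i}$, so $c_{1}(E)=\sigma$.
\item For Grothendieck's $\bbP(E)$ one has $c_{1}(E)=-\sigma$, but the relation is then $\xi^{2}-c_{1}(E)\xi+c_{2}(E)=0$, and $c_{1}(T_{Y/B})=2\xi-c_{1}(E)$.
\end{itemize}
Your $\alpha=\Omega-\pi^{\ast}(\omega_{0}+\omega_{1})$ has class $\xi-\sigma$. Hence $[\omega]\cup[\alpha]=-\xi^{2}+\sigma\xi=2\sigma\xi$. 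This is non-zero, since $\int_{Y}2\sigma\xi\cup\sigma^{k-1}=2\int_{B}[\omega_{0}+\omega_{1}]^{k}\neq0$. Likewise $c_{1}(T^{1,0}Y)=2\xi+\sigma$, not $2\xi-\sigma$.

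The repair is only a sign: take $\alpha\coloneqq\Omega+\pi^{\ast}(\omega_{0}+\omega_{1})$, which is minus the paper's $\alpha=\omega-\pi^{\ast}(\omega_{0}+\omega_{1})$. With $a=\sigma$, the rest of your argument is correct as written:
\begin{itemize}
\item $\xi\cup[\alpha]=0$ and $[\alpha]^{j}=[\alpha]\sigma^{j-1}$;
\item $c_{1}(T^{1,0}Y)=2[\alpha]-\sigma$, so the binomial sum collapses;
\item $\int_{Y}[\alpha]\cup c_{1}(T^{1,0}Y)^{k}=\int_{B}[\omega_{0}+\omega_{1}]^{k}=\int_{B}\omega_{0}^{k}+\int_{B}\omega_{1}^{k}\neq0$, because both terms have sign $(-1)^{q}$.
\end{itemize}
Compared with the paper, your symmetric model avoids the separate check along the infinity section. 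Your binomial collapse and direct sign argument are tidy substitutes for the paper's expansion and continuity remark. Otherwise the route is the same.
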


\begin{proof}
	Take a Hermitian holomorphic line bundle $(L, h_{L})$ over $B$ such that
	\begin{equation}
		\frac{\sqrt{- 1}}{2 \pi} \Theta_{h_{L}} = \omega_{1} - \omega_{0}.
	\end{equation}
	Let $\pi \colon Y \coloneqq \bbP(\bbC \oplus L) \to B$ be the projective bundle of $\bbC \oplus L$
	and let $S \coloneqq \calO_{Y}(- 1) \subset \pi^{\ast} (\bbC \oplus L)$ be
	the relative tautological line bundle on $Y$.
	Equip $\bbC \oplus L$ with the orthogonal sum metric
	and $S$ with the induced metric $h_{S}$.
	Consider the closed real $(1, 1)$-form
	\begin{equation}
		\omega
		\coloneqq \frac{\sqrt{- 1}}{2 \pi} \Theta_{h_{S}} + \pi^{\ast} \omega_{0},
	\end{equation}
	which defines an integral cohomology class.
	We first show that $\omega$ is a pseudo-\Kahler form of signature $(p, q + 1)$.
	Choose a local holomorphic frame $s$ of $L$ and write $h \coloneqq h_{L}(s, s)$.
	We use the affine fiber coordinate $w$ on the chart where the tautological line bundle $S$ is generated by
	\begin{equation}
		\tau \coloneqq e_{0} + w s,
	\end{equation}
	where $e_{0}$ is the standard unit vector of $\bbC$.
	Note that
	\begin{equation}
		h_{S}(\tau, \tau)
		= 1 + h_{L}(w s, w s)
		= 1 + h \abs{w}^{2}.
	\end{equation}
	Define
	\begin{equation}
		\eta_{w} \coloneqq d w + w h^{- 1} \partial h,
		\qquad
		t \coloneqq \frac{h \abs{w}^{2}}{1 + h \abs{w}^{2}}.
	\end{equation}
	A calculation gives
	\begin{align}
		\frac{\sqrt{- 1}}{2 \pi} \Theta_{h_{S}}
		&= - \frac{\sqrt{- 1}}{2 \pi} \partial \delb \log (1 + h \abs{w}^{2}) \\
		&= - \frac{\sqrt{- 1}}{2 \pi} \frac{h}{(1 + h \abs{w}^{2})^{2}} \eta_{w} \wedge \ovxet_{w}
			- \frac{\sqrt{- 1}}{2 \pi} t \partial \delb \log h \\
		&= - \relFS + t \pi^{\ast} (\omega_{1} - \omega_{0}),
	\end{align}
	where $\relFS$ is the relative Fubini-Study form on $Y$.
	Its restriction to each fiber is the Fubini-Study form.
	This implies
	\begin{equation}
		\omega
		= - \relFS + t \pi^{\ast} (\omega_{1} - \omega_{0}) + \pi^{\ast} \omega_{0}
		= - \relFS + \pi^{\ast} \omega_{t}.
	\end{equation}
	To check non-degeneracy along the infinity section, use the affine coordinate $v = 1 / w$
	and set $\eta_{v} \coloneqq d v - v h^{- 1} \partial h$.
	Since $\eta_{w} = - v^{- 2} \eta_{v}$ on the overlap, we have
	\begin{equation}
		t = \frac{h}{h + \abs{v}^{2}},
		\qquad
		\relFS = \frac{\sqrt{- 1}}{2 \pi} \frac{h}{(h + \abs{v}^{2})^{2}}
			\eta_{v} \wedge \ovxet_{v}.
	\end{equation}
	These expressions extend smoothly across $v = 0$.
	At each point of the infinity section,
	\begin{equation}
		\omega
		= - \frac{\sqrt{- 1}}{2 \pi h} d v \wedge d \ovv + \pi^{\ast} \omega_{1}.
	\end{equation}
	Thus $\omega$ has signature $(p, q + 1)$ there as well
	and is therefore a pseudo-\Kahler form of this signature on all of $Y$.
	
	Set $Q \coloneqq \pi^{\ast} (\bbC \oplus L) / S$,
	which is a holomorphic line bundle over $Y$.
	Consider the tautological exact sequence
	\begin{equation}
		0 \longrightarrow S \longrightarrow \pi^{\ast} (\bbC \oplus L) \longrightarrow Q \longrightarrow 0.
	\end{equation}
	Using the definition of $\omega$ and the above exact sequence,
	we obtain
	\begin{equation}
		c_{1}(S) = [\omega] - \pi^{\ast} [\omega_{0}],
		\qquad
		c_{1}(Q) = - [\omega] + \pi^{\ast} [\omega_{1}].
	\end{equation}
	Moreover,
	we have
	\begin{equation}
		[\omega] \cup ([\omega] - \pi^{\ast} [\omega_{0} + \omega_{1}])
		= - c_{1}(S) \cup c_{1}(Q)
		= - c_{2}(\pi^{\ast} (\bbC \oplus L))
		= 0.
	\end{equation}
	Set $\alpha \coloneqq \omega - \pi^{\ast} (\omega_{0} + \omega_{1})$,
	which is a closed real $(1, 1)$-form on $Y$ satisfying $[\omega] \cup [\alpha] = 0$.
	The relative holomorphic tangent bundle $T^{1, 0}_{Y / B}$ of $\pi \colon Y \to B$
	is isomorphic to $\Hom(S, Q) \cong S^{\ast} \otimes Q$.
	Since $c(T^{1, 0} B) = 1$,
	we have
	\begin{equation}
		c(T^{1, 0} Y)
		= c(T^{1, 0}_{Y / B})
		= 1 - 2 [\omega] + \pi^{\ast} [\omega_{0} + \omega_{1}].
	\end{equation}
	Using this formula and integrating along the fibers, we obtain
	\begin{align}
		\int_{Y} [\alpha] \cup c_{1}(T^{1, 0} Y)^{k}
		&= \int_{Y} ([\omega] - \pi^{\ast} [\omega_{0} + \omega_{1}])
			\cup (- 2 [\omega] + \pi^{\ast} [\omega_{0} + \omega_{1}])^{k} \\
		&= \int_{Y} [\omega] \cup \pi^{\ast} [\omega_{0} + \omega_{1}]^{k} \\
		&= - \int_{B} [\omega_{0} + \omega_{1}]^{k} \\
		&= - \int_{B} (\omega_{0}^{k} + \omega_{1}^{k}).
	\end{align}
	Since $\omega_{t}$ is a continuous family of pseudo-\Kahler forms,
	the integrals of $\omega_{0}^{k}$ and $\omega_{1}^{k}$ are non-zero and have the same sign.
	Consequently,
	\begin{equation}
		\int_{Y} [\alpha] \cup c_{1}(T^{1, 0} Y)^{k} \neq 0,
	\end{equation}
	which completes the proof.
\end{proof}

\section{Low-dimensional building blocks}
\label{section:building-blocks}

In this section, we construct closed pseudo-\Kahler manifolds $(Y_{d}, \omega_{d})$
of complex dimensions $d = 2, 3, 4$
and closed real $(1, 1)$-forms $\alpha_{d}$ on $Y_{d}$ such that
\begin{equation}
	[\omega_{d}] \in H^{2}(Y_{d}; \bbZ),
	\qquad
	[\omega_{d}] \cup [\alpha_{d}] = 0,
	\qquad
	A_{d} \coloneqq \int_{Y_{d}} [\alpha_{d}] \cup c_{1}(T^{1, 0} Y_{d})^{d - 1} \neq 0.
\end{equation}
We also record the Chern classes needed for the product construction in \cref{section:Chern-classes}.

\subsection{A product of projective lines}

Let $\FS$ be the Fubini-Study form on $\cps^{1}$ normalized by $[\FS] = c_{1}(\calO(1))$.
Define $Y_{2} \coloneqq \cps^{1} \times \cps^{1}$.
Write $\pr_{j} \colon Y_{2} \to \cps^{1}$ for the $j$-th projection.
The total Chern class $c(T^{1, 0} Y_{2})$ of $T^{1, 0} Y_{2}$ is given by
\begin{align}
	c(T^{1, 0} Y_{2})
	&= (1 + \pr_{1}^{\ast} c_{1}(T^{1, 0} \cps^{1})) \cup (1 + \pr_{2}^{\ast} c_{1}(T^{1, 0} \cps^{1})) \\
	&= 1 + 2 (\pr_{1}^{\ast} [\FS] + \pr_{2}^{\ast} [\FS]) + 4 \pr_{1}^{\ast} [\FS] \cup \pr_{2}^{\ast} [\FS].
\end{align}
Set
\begin{equation}
	\omega_{2}
	\coloneqq \pr_{1}^{\ast} \FS - \pr_{2}^{\ast} \FS,
	\qquad
	\alpha_{2}
	\coloneqq \pr_{1}^{\ast} \FS + \pr_{2}^{\ast} \FS.
\end{equation}
The closed real $(1, 1)$-form $\omega_{2}$ is an integral pseudo-\Kahler form of signature $(1, 1)$
and satisfies $\omega_{2} \wedge \alpha_{2} = 0$.
Moreover,
\begin{equation}
	A_{2}
	\coloneqq \int_{Y_{2}} [\alpha_{2}] \cup c_{1}(T^{1, 0} Y_{2})
	= 2 \int_{Y_{2}} (\pr_{1}^{\ast} [\FS] + \pr_{2}^{\ast} [\FS])^{2}
	= 4
	\neq 0.
\end{equation}

\subsection{A threefold over a complex torus}

Define $\Sigma \coloneqq \bbC / (\bbZ + \sqrt{- 1} \bbZ)$
and $B \coloneqq \Sigma \times \Sigma$.
The closed complex surface $B$ satisfies $c(T^{1, 0} B) = 1$.
We use the induced local holomorphic coordinates $(z, w)$.
Then
\begin{equation}
	\omega_{0} \coloneqq \sqrt{- 1} (d z \wedge d \ovz - d w \wedge d \ovw),
	\qquad
	\omega_{1} \coloneqq \sqrt{- 1} (d z \wedge d \ovw + d w \wedge d \ovz)
\end{equation}
are closed real $(1, 1)$-forms on $B$ satisfying
$[\omega_{0}], [\omega_{1}] \in H^{2}(B; \bbZ)$ and $\omega_{0} \wedge \omega_{1} = 0$.
Moreover,
$\omega_{t} = (1 - t) \omega_{0} + t \omega_{1}$ satisfies
\begin{equation}
	\omega_{t}^{2}
	= (1 - t)^{2} \omega_{0}^{2} + t^{2} \omega_{1}^{2}
	= 2 ((1 - t)^{2} + t^{2}) d z \wedge d \ovz \wedge d w \wedge d \ovw
	\neq 0.
\end{equation}
Hence $\omega_{t}$ is a pseudo-\Kahler form of signature $(1, 1)$ for every $t \in \clcl{0}{1}$.
\cref{prop:projective-bundle} implies that
there exist a closed pseudo-\Kahler threefold $(Y_{3}, \omega_{3})$ of signature $(1, 2)$
and a closed real $(1, 1)$-form $\alpha_{3}$ on $Y_{3}$ such that
$c(T^{1, 0} Y_{3}) = 1 + c_{1}(T^{1, 0} Y_{3})$,
$[\omega_{3}] \in H^{2}(Y_{3}; \bbZ)$,
$[\omega_{3}] \cup [\alpha_{3}] = 0$,
and
\begin{equation}
	A_{3} \coloneqq \int_{Y_{3}} [\alpha_{3}] \cup c_{1}(T^{1, 0} Y_{3})^{2} \neq 0.
\end{equation}

A related symplectic construction on an $S^{2}$-bundle over the four-torus
appears in Cho~\cite{Cho2016}*{Example 5.3}.
Both constructions use endpoint classes with vanishing cup product;
see also \cite{Cho2016}*{Proposition 1.7}.
Here we construct an integral pseudo-\Kahler form
and establish a non-zero Chern-class pairing for use in the subsequent CR construction.

\subsection{A fourfold over a nilmanifold}

Let $\frakg$ be a real six-dimensional Lie algebra with basis $e_{1}, \dots , e_{6}$
whose only non-zero brackets are
\begin{equation}
	\comm{e_{1}}{e_{2}} = - e_{4},
	\qquad
	\comm{e_{1}}{e_{3}} = - e_{5},
	\qquad
	\comm{e_{2}}{e_{3}} = - e_{6}.
\end{equation}
The Lie algebra $\frakg$ is two-step nilpotent.
This is the Lie algebra $\frakh_{7} = (0,0,0,12,13,23)$,
whose pseudo-\Kahler structures were studied by Cordero, Fern\'andez, and Ugarte
\cite{Cordero-Fernandez-Ugarte2004}*{Proposition 3.9 (i)}.
Let $G$ be the simply connected Lie group corresponding to $\frakg$.
The exponential map
\begin{equation}
	\exp \colon \frakg \longrightarrow G
\end{equation}
is a diffeomorphism;
we write $\log \colon G \to \frakg$ for the inverse map.
The Baker--Campbell--Hausdorff formula states that
\begin{equation}
	\log (\exp X \cdot \exp Y)
	= X + Y + \frac{1}{2} \comm{X}{Y}
\end{equation}
for any $X, Y \in \frakg$.
Consider the lattice $\Lambda$ in $\frakg$ defined by
\begin{equation}
	\Lambda
	\coloneqq 2 \sqrt{3} \bbZ e_{1} \oplus 2 \bbZ e_{2} \oplus 2 \sqrt{3} \bbZ e_{3}
		\oplus \sqrt{3} \bbZ e_{4} \oplus \bbZ e_{5} \oplus \sqrt{3} \bbZ e_{6}.
\end{equation}
This lattice satisfies $\comm{\Lambda}{\Lambda} \subset 2 \Lambda$.
Hence $\Gamma \coloneqq \exp \Lambda$ is a lattice in $G$,
and $B \coloneqq \Gamma \backslash G$ is a closed real six-manifold.

Consider a left-invariant almost complex structure on $G$ defined by
\begin{equation}
	J e_{1} = e_{2},
	\qquad
	J e_{3} = e_{4},
	\qquad
	J e_{5} = e_{6}.
\end{equation}
This is the complex structure described in \cite{Fino-Rollenske-Ruppenthal2019}*{Proposition 5.3 (ii)}.
A direct calculation shows that $J$ is integrable,
and so $G$ has a left-invariant complex structure.
Hence $B$ is a closed complex threefold.
Since $T^{1, 0} B$ is trivial as a smooth complex vector bundle,
we have $c(T^{1, 0} B) = 1$.
Let $e^{1}, \dots , e^{6}$ be the dual basis of $\frakg^{\ast}$,
whose elements we regard as left-invariant one-forms on $G$.
The structure equations of $\frakg$ imply
\begin{equation}
	d e^{1} = d e^{2} = d e^{3} = 0,
	\qquad
	d e^{4} = e^{1 2},
	\qquad
	d e^{5} = e^{1 3},
	\qquad
	d e^{6} = e^{2 3};
\end{equation}
here and in the sequel,
we adopt the abbreviation $e^{i j k \dotsm}$ for $e^{i} \wedge e^{j} \wedge e^{k} \wedge \dotsm$.
Define the real $(1, 1)$-forms
\begin{equation}
	\mu \coloneqq \sqrt{3} e^{1 5} + \sqrt{3} e^{2 6},
	\qquad
	\nu \coloneqq - e^{1 6} + e^{2 5} + 2 e^{3 4}.
\end{equation}
A direct calculation shows that $\mu$ and $\nu$ are $d$-closed
and that
\begin{gather}
	\mu^{2} = - 6 e^{1 2 5 6},
	\qquad
	\nu^{2} = - 2 e^{1 2 5 6} - 4 e^{1 3 4 6} + 4 e^{2 3 4 5}, \\
	\mu^{3} = \mu \wedge \nu^{2} = 0,
	\qquad
	\mu^{2} \wedge \nu = \nu^{3} = - 12 e^{1 2 3 4 5 6}.
\end{gather}

The rational structure $\frakg_{\bbQ}$ of $\frakg$ determined by $\Gamma$ is given by
\begin{equation}
	\frakg_{\bbQ}
	= \Span_{\bbQ} (\log \Gamma)
	= \Lambda \otimes \bbQ.
\end{equation}
By the rational version of Nomizu's theorem~\cite{Lambe-Priddy1982}*{Corollary 4.2},
the Chevalley-Eilenberg complex
\begin{equation}
	\rbra*{\bigwedge \frakg_{\bbQ}^{\ast}, d_{\mathrm{CE}}}
\end{equation}
computes $H^{\ast}(B; \bbQ)$,
and this identification is compatible with extension of scalars to $\bbR$
and the de Rham isomorphism.
Since $\mu$ and $\nu$ are closed invariant forms with rational coefficients relative to $\frakg_{\bbQ}$,
their de Rham classes are rational.
Hence there exists a positive integer $N$ such that
both $N [\mu]$ and $N [\nu]$ lie in the image of integral cohomology.
Set
\begin{equation}
	\omega_{0} \coloneqq N (- \mu + \nu),
	\qquad
	\omega_{1} \coloneqq N (\mu + \nu).
\end{equation}
Then $[\omega_{0}], [\omega_{1}] \in H^{2}(B; \bbZ)$ and
\begin{equation}
	\omega_{0} \wedge \omega_{1}
	= N^{2} (- \mu^{2} + \nu^{2})
	= 4 N^{2} (e^{1 2 5 6} - e^{1 3 4 6} + e^{2 3 4 5})
	= 4 N^{2} \, d(e^{4 5 6}).
\end{equation}
In particular,
$[\omega_{0}] \cup [\omega_{1}] = 0$.
Moreover,
\begin{equation}
	\omega_{t}
	\coloneqq (1 - t) \omega_{0} + t \omega_{1}
	= N ((2 t - 1) \mu + \nu)
\end{equation}
satisfies
\begin{equation}
	\omega_{t}^{3}
	= 3 N^{3} (2 t - 1)^{2} \mu^{2} \wedge \nu + N^{3} \nu^{3}
	= - 12 N^{3} (3 (2 t - 1)^{2} + 1) e^{1 2 3 4 5 6}
	\neq 0
\end{equation}
and
\begin{equation}
	\omega_{t}(e_{3}, J e_{3})
	= \omega_{t}(e_{3}, e_{4})
	= 2 N
	> 0.
\end{equation}
Hence $\omega_{t}$ is a pseudo-\Kahler form of signature $(2, 1)$ for every $t \in \clcl{0}{1}$.
\cref{prop:projective-bundle} implies that
there exist a closed pseudo-\Kahler fourfold $(Y_{4}, \omega_{4})$ of signature $(2, 2)$
and a closed real $(1, 1)$-form $\alpha_{4}$ on $Y_{4}$ such that
$c(T^{1, 0} Y_{4}) = 1 + c_{1}(T^{1, 0} Y_{4})$,
$[\omega_{4}] \in H^{2}(Y_{4}; \bbZ)$,
$[\omega_{4}] \cup [\alpha_{4}] = 0$,
and
\begin{equation}
	A_{4} \coloneqq \int_{Y_{4}} [\alpha_{4}] \cup c_{1}(T^{1, 0} Y_{4})^{3} \neq 0.
\end{equation}

\section{Products and non-vanishing of Chern monomials}
\label{section:Chern-classes}

We now combine the building blocks from \cref{section:building-blocks}
to prove \cref{thm:non-trivial-higher-Chern-class}.

\subsection{The product construction}
\label{subsection:product-construction}

Fix $n$ and $m$ as in \cref{thm:non-trivial-higher-Chern-class}.
We first note that $2 m \leq n \leq 4 m$.
Write $n = (q + 1) m + s$,
where $1 \leq q \leq 3$ and $0 \leq s < m$,
and set
\begin{equation}
	n_{j}
	\coloneqq
	\begin{cases}
		q + 2 & (1 \leq j \leq s), \\
		q + 1 & (s + 1 \leq j \leq m).
	\end{cases}
\end{equation}
These integers satisfy $n_{1} + \dots + n_{m} = n$.
Note that $q = 3$ implies $s = 0$,
since $n \leq 4 m$.
Thus $2 \leq n_{j} \leq 4$ for every $j$,
so all the factors $Y_{n_{j}}$ used below have already been constructed.
Consider the $n$-dimensional closed complex manifold
\begin{equation}
	Y \coloneqq Y_{n_{1}} \times \cdots \times Y_{n_{m}}
\end{equation}
and denote by $\pr_{j} \colon Y \to Y_{n_{j}}$ the $j$-th projection.
Let
\begin{equation}
	J_{3} \coloneqq \Set{j \in \{1, \dots , m\} | n_{j} = 3},
	\qquad
	b \coloneqq \# J_{3}.
\end{equation}
Choose a subset $J_{-} \subset J_{3}$ with $\# J_{-} = \lfloor b / 2 \rfloor$,
and set
\begin{equation}
	\varepsilon_{j}
	\coloneqq
	\begin{cases}
		- 1 & (j \in J_{-}), \\
		1 & (j \in \{1, \dots , m\} \setminus J_{-}).
	\end{cases}
\end{equation}
Define
\begin{equation}
	\omega
	\coloneqq \sum_{j = 1}^{m} \varepsilon_{j} \pr_{j}^{\ast} \omega_{n_{j}},
	\qquad
	\alpha
	\coloneqq \pr_{1}^{\ast} \alpha_{n_{1}} \wedge \dots \wedge \pr_{m}^{\ast} \alpha_{n_{m}}.
\end{equation}
Each $Y_{3}$ factor indexed by $J_{-}$ contributes signature $(2, 1)$,
whereas each remaining $Y_{3}$ factor contributes signature $(1, 2)$.
The $Y_{2}$ and $Y_{4}$ factors have signatures $(1, 1)$ and $(2, 2)$, respectively.
Hence the difference between the negative and positive indices of $\omega$ is
\begin{equation}
	b - 2 \lfloor b / 2 \rfloor.
\end{equation}
Since $n \equiv b \pmod{2}$,
the form $\omega$ has signature $(\lfloor n / 2 \rfloor, \lceil n / 2 \rceil)$.
Moreover,
$\omega$ is a closed real $(1, 1)$-form satisfying
$[\omega] \in H^{2}(Y; \bbZ)$ and $[\omega] \cup [\alpha] = 0$.
Thus $(Y, \omega)$ is a pseudo-\Kahler manifold with the required signature.

\subsection{Proof of the main theorem}

For the manifold and forms constructed above, we prove the following non-vanishing statement.

\begin{theorem}
\label{thm:higher-Chern-violates-hard-Lefschetz}
	Let $n$ and $m$ be as in \cref{thm:non-trivial-higher-Chern-class}.
	There exist an $n$-dimensional closed pseudo-\Kahler manifold $(Y, \omega)$
	of signature $(\lfloor n / 2 \rfloor, \lceil n / 2 \rceil)$
	and a closed real $(m, m)$-form $\alpha$ on $Y$ such that
	$[\omega] \in H^{2}(Y; \bbZ)$,
	$[\omega] \cup [\alpha] = 0$,
	and
	\begin{equation}
		\int_{Y} [\alpha] \cup c_{r_{1}}(T^{1, 0} Y) \cup \dotsm \cup c_{r_{k}}(T^{1, 0} Y)
			\cup c_{1}(T^{1, 0} Y)^{n - m - \abs{r}} \neq 0
	\end{equation}
	for any integer $k \geq 1$ and any $(r_{1}, \dots , r_{k}) \in \{1, \dots , m\}^{k}$
	satisfying $\abs{r} \leq n - m$.
\end{theorem}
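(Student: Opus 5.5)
We take $(Y, \omega)$ and $\alpha$ to be exactly the data of \cref{subsection:product-construction}. The signature, integrality and closedness are already established there. The form $\alpha$ is a wedge product of $m$ closed real $(1,1)$-forms, hence a closed real $(m,m)$-form. The relation $[\omega] \cup [\alpha] = 0$ holds because each summand $\varepsilon_{j} \pr_{j}^{\ast}[\omega_{n_{j}}] \cup [\alpha]$ contains the factor $\pr_{j}^{\ast}([\omega_{n_{j}}] \cup [\alpha_{n_{j}}]) = 0$. So the whole content of the theorem is the non-vanishing of the integral, which we reduce to a combinatorial statement about monomials in the classes
\begin{equation}
	x_{j} \coloneqq \pr_{j}^{\ast} c_{1}(T^{1, 0} Y_{n_{j}}).
\end{equation}

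\textbf{Step 1: Chern classes as polynomials in the $x_{j}$.} For $n_{j} \in \{3, 4\}$ we have $c(T^{1, 0} Y_{n_{j}}) = 1 + c_{1}$ by \cref{prop:projective-bundle}. For $Y_{2}$, with $c_{1} = 2(\pr_{1}^{\ast}[\FS] + \pr_{2}^{\ast}[\FS])$, one checks $c_{1}^{2} = 8 \pr_{1}^{\ast}[\FS] \cup \pr_{2}^{\ast}[\FS] = 2 c_{2}$, so $c(T^{1,0} Y_{2}) = 1 + c_{1} + c_{1}^{2}/2$. By the Whitney formula,
\begin{equation}
	c(T^{1, 0} Y) = \prod_{j = 1}^{m} \rbra*{1 + x_{j} + \delta_{j} x_{j}^{2} / 2},
\end{equation}
where $\delta_{j} = 1$ if $n_{j} = 2$ and $\delta_{j} = 0$ otherwise. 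Hence each $c_{r}(T^{1,0} Y)$ is a polynomial in the $x_{j}$ with nonnegative coefficients, and it dominates the elementary symmetric polynomial $e_{r}(x_{1}, \dots, x_{m})$ coefficientwise. The same then holds for the class
\begin{equation}
	P \coloneqq c_{r_{1}} \cup \dotsm \cup c_{r_{k}} \cup (x_{1} + \dots + x_{m})^{n - m - \abs{r}},
\end{equation}
which is homogeneous of degree $n - m$ and dominates $e_{r_{1}} \dotsm e_{r_{k}} (\sum_{j} x_{j})^{n - m - \abs{r}}$.

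\textbf{Step 2: factorization of the integral.} By the Künneth formula, for a monomial $\prod_{j} x_{j}^{a_{j}}$ with $\sum_{j} a_{j} = n - m$,
\begin{equation}
	\int_{Y} [\alpha] \cup \prod_{j} x_{j}^{a_{j}} = \prod_{j} \int_{Y_{n_{j}}} [\alpha_{n_{j}}] \cup c_{1}(T^{1, 0} Y_{n_{j}})^{a_{j}}.
\end{equation}
A factor vanishes for degree reasons unless $a_{j} = n_{j} - 1$. Since $\sum_{j} (n_{j} - 1) = n - m$, the only surviving monomial is $\prod_{j} x_{j}^{n_{j} - 1}$, and its integral equals $\prod_{j} A_{n_{j}} \neq 0$. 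Therefore
\begin{equation}
	\int_{Y} [\alpha] \cup P = C \cdot \prod_{j} A_{n_{j}},
\end{equation}
where $C \geq 0$ is the coefficient of $\prod_{j} x_{j}^{n_{j} - 1}$ in $P$. It suffices to show $C > 0$, and by the coefficientwise domination in Step 1 it is enough that this monomial occurs in $e_{r_{1}} \dotsm e_{r_{k}} (\sum_{j} x_{j})^{n - m - \abs{r}}$.

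\textbf{Step 3: the combinatorial step, which is the main obstacle.} We must choose, for each $i$, a set of $r_{i}$ distinct indices so that each index $j$ is chosen at most $n_{j} - 1$ times in total. The remaining exponents can then be supplied by the power of $\sum_{j} x_{j}$, since the total degree works out. We use a greedy choice. Start with deficits $d_{j} = n_{j} - 1$; these differ pairwise by at most $1$. Process $r_{1}, r_{2}, \dots$ in turn, each time choosing the $r_{i}$ indices with largest current deficit and lowering those deficits by one. This keeps all deficits within $1$ of each other. Consequently, whenever the remaining total deficit $D$ satisfies $D \geq r_{i}$, the number of indices with positive deficit is at least $\min(m, D) \geq r_{i}$, using $r_{i} \leq m$. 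The inequality $D \geq r_{i}$ always holds because $\abs{r} \leq n - m = \sum_{j} d_{j}$. Hence the greedy choice never fails, $C > 0$, and the integral is non-zero. The delicate points to double-check are the balancing invariant of the greedy step and the positivity argument: no cancellation can occur, because all coefficients in $P$ are nonnegative and only one monomial contributes to the integral.
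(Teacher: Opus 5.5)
Your proposal is correct and follows essentially the paper's route. You use the same product $Y$, $\omega$, $\alpha$, a Künneth factorization that isolates the single contributing term $\prod_{j} A_{n_{j}}$, and positivity from nonnegative coefficients together with a combinatorial choice of $r_{l}$-element index sets that respects the multiplicities $n_{j} - 1$.

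The differences are only bookkeeping:
\begin{itemize}
\item You expand in the factor classes $x_{j}$, which requires the correct extra identity $c_{2}(T^{1,0} Y_{2}) = c_{1}(T^{1,0} Y_{2})^{2}/2$. The paper instead uses Chern polynomials in variables $t_{l}$ to obtain $\prod_{j} A_{n_{j}} e_{n_{j}-1}(t_{1}, \dots, t_{k})$.
\item You absorb the power of $c_{1}$ through $(\sum_{j} x_{j})^{n-m-\abs{r}}$. The paper instead pads $r$ with $1$'s to reduce to $\abs{r} = n - m$.
\item You exhibit the combinatorial choice with a greedy balancing argument, which checks out. The paper uses an explicit round-robin sequence.
\end{itemize}
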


\begin{proof}
	Let $Y$, $\omega$, and $\alpha$ be as in \cref{subsection:product-construction}.
	Let $r = (r_{1}, \dots , r_{k}) \in \{1, \dots , m\}^{k}$ satisfy $\abs{r} \leq n - m$.
	Set $h \coloneqq n - m - \abs{r}$,
	and define
	\begin{equation}
		(\wtr_{1}, \dots , \wtr_{k}, \wtr_{k + 1}, \dots , \wtr_{k + h})
		\coloneqq (r_{1}, \dots , r_{k}, \underbrace{1, \dots , 1}_{\text{$h$ entries}}).
	\end{equation}
	All entries of this multi-index lie in $\{1, \dots, m\}$,
	and its total weight is $n - m$.
	Moreover, the integral associated with $\wtr$ is precisely
	\begin{equation}
		\int_{Y} [\alpha] \cup c_{r_{1}}(T^{1, 0} Y) \cup \dotsm \cup c_{r_{k}}(T^{1, 0} Y)
			\cup c_{1}(T^{1, 0} Y)^{h}.
	\end{equation}
	Thus the assertion for $r$ follows from that for $\wtr$.
	In the remainder of the proof, we therefore assume that $\abs{r} = n - m$.
	It remains to show that the integral
	\begin{equation}
	\label{eq:non-trivial-integral}
		\int_{Y} [\alpha] \cup c_{r_{1}}(T^{1, 0} Y) \cup \dotsm \cup c_{r_{k}}(T^{1, 0} Y)
	\end{equation}
	is non-zero.
	To this end,
	consider the Chern polynomial
	\begin{equation}
		c_{t}(T^{1, 0} Y)
		= \sum_{l \geq 0} c_{l}(T^{1, 0} Y) t^{l}
		= \prod_{j = 1}^{m} \pr_{j}^{\ast} c_{t}(T^{1, 0} Y_{n_{j}})
	\end{equation}
	and the $i$-th elementary symmetric polynomial $e_{i}(t_{1}, \dots , t_{k})$.
	The computations in the previous section give
	\begin{gather}
		\int_{Y_{2}} [\alpha_{2}] \cup \prod_{l = 1}^{k} c_{t_{l}}(T^{1, 0} Y_{2})
		= A_{2} e_{1}(t_{1}, \dots , t_{k}), \\
		\int_{Y_{3}} [\alpha_{3}] \cup \prod_{l = 1}^{k} c_{t_{l}}(T^{1, 0} Y_{3})
		= A_{3} e_{2}(t_{1}, \dots , t_{k}), \\
		\int_{Y_{4}} [\alpha_{4}] \cup \prod_{l = 1}^{k} c_{t_{l}}(T^{1, 0} Y_{4})
		= A_{4} e_{3}(t_{1}, \dots , t_{k}).
	\end{gather}
	Thus we have
	\begin{equation}
		\int_{Y} [\alpha] \cup \prod_{l = 1}^{k} c_{t_{l}}(T^{1, 0} Y)
		= \rbra*{\prod_{j = 1}^{m} A_{n_{j}}} \prod_{j = 1}^{m} e_{n_{j} - 1}(t_{1}, \dots, t_{k}).
	\end{equation}
	Moreover,
	the coefficient of $t_{1}^{r_{1}} \dotsm t_{k}^{r_{k}}$ in the left-hand side
	is equal to \eqref{eq:non-trivial-integral}.
	Hence it suffices to show that
	the coefficient of $t_{1}^{r_{1}} \dotsm t_{k}^{r_{k}}$ in
	\begin{equation}
		\prod_{j = 1}^{m} e_{n_{j} - 1}(t_{1}, \dots, t_{k})
	\end{equation}
	is non-zero.
	
	Since all coefficients of the elementary symmetric polynomials are nonnegative integers,
	it suffices to choose one monomial from each factor
	$e_{n_{j} - 1}(t_{1}, \dots , t_{k})$
	so that the product of the chosen monomials is
	$t_{1}^{r_{1}} \dotsm t_{k}^{r_{k}}$.
	We construct such a choice as follows.
	Consider the sequence
	\begin{equation}
		\underbrace{1, \dots , m, 1, \dots , m, \dots , 1, \dots , m}_{\text{$q$ times}},
		1, \dots , s.
	\end{equation}
	Here the final segment $1, \dots , s$ is understood to be empty when $s = 0$.
	Each of the labels $1, \dots , s$ occurs $q + 1$ times,
	whereas each of the labels $s + 1, \dots , m$ occurs $q$ times.
	By the definition of $n_{j}$,
	the label $j$ therefore occurs exactly $n_{j} - 1$ times.
	Moreover, the total length of the sequence is
	\begin{equation}
		q m + s = n - m = r_{1} + \dots + r_{k}.
	\end{equation}
	We can therefore divide this sequence into $k$ consecutive blocks of lengths $r_{1}, \dots , r_{k}$.
	Since $r_{l} \leq m$,
	no label occurs twice in the same block.
	For each $1 \leq j \leq m$,
	let $I_{j} \subset \{1, \dots , k\}$ be
	the set of indices of the blocks containing the label $j$.
	Then $\# I_{j} = n_{j} - 1$,
	and each index $l$ belongs to exactly $r_{l}$ of the sets $I_{j}$.
	Thus $\prod_{l \in I_{j}} t_{l}$ is a monomial appearing in $e_{n_{j} - 1}(t_{1}, \dots , t_{k})$,
	and
	\begin{equation}
		\prod_{j = 1}^{m} \rbra*{\prod_{l \in I_{j}} t_{l}}
		= t_{1}^{r_{1}} \dotsm t_{k}^{r_{k}}.
	\end{equation}
	Therefore,
	the desired coefficient is positive.
	As $A_{n_{j}} \neq 0$ for every $j$,
	the integral \eqref{eq:non-trivial-integral} is non-zero.
	This completes the proof.
\end{proof}

\begin{proof}[Proof of \cref{thm:non-trivial-higher-Chern-class}]
	Take $(Y, \omega)$ and $\alpha$ as in \cref{thm:higher-Chern-violates-hard-Lefschetz},
	and let $p \colon S \to Y$ be the associated circle bundle.
	By \cref{section:circle-bundles},
	$S$ is a closed Levi non-degenerate CR manifold of dimension $2 n + 1$
	and signature $(\lfloor n / 2 \rfloor, \lceil n / 2 \rceil)$.
	For any admissible multi-index $r$, put
	\begin{equation}
		\beta_{r}
		\coloneqq [\alpha] \cup c_{1}(T^{1, 0} Y)^{n - m - \abs{r}}
		\in H^{2 n - 2 \abs{r}}(Y; \bbR).
	\end{equation}
	Then $[\omega] \cup \beta_{r} = 0$,
	and \cref{thm:higher-Chern-violates-hard-Lefschetz} gives
	\begin{equation}
		\int_{Y} \beta_{r} \cup c_{r_{1}}(T^{1, 0} Y) \cup \dotsm \cup c_{r_{k}}(T^{1, 0} Y) \neq 0.
	\end{equation}
	Applying \cref{lem:chern-non-vanishing-criterion} yields
	\begin{equation}
		c_{r_{1}}(T^{1, 0} S) \cup \dotsm \cup c_{r_{k}}(T^{1, 0} S)
		= p^{\ast} \rbra*{c_{r_{1}}(T^{1, 0} Y) \cup \dotsm \cup c_{r_{k}}(T^{1, 0} Y)}
		\neq 0,
	\end{equation}
	which completes the proof.
\end{proof}

\subsection{Further consequences and limitations}

The pseudo-\Kahler counterpart of \cref{thm:non-trivial-first-Chern-class}
is also a direct consequence of \cref{thm:higher-Chern-violates-hard-Lefschetz}.

\begin{corollary}
\label{thm:first-Chern-violates-hard-Lefschetz}
	For each $n \geq 2$,
	there exist an $n$-dimensional closed pseudo-\Kahler manifold $(Y, \omega)$
	of signature $(\lfloor n / 2 \rfloor, \lceil n / 2 \rceil)$
	and a closed real $(\lceil n / 4 \rceil, \lceil n / 4 \rceil)$-form $\alpha$ on $Y$ such that
	$[\omega] \in H^{2}(Y; \bbZ)$,
	$[\omega] \cup [\alpha] = 0$,
	and
	\begin{equation}
		\int_{Y} [\alpha] \cup c_{1}(T^{1, 0} Y)^{\lfloor 3 n / 4 \rfloor} \neq 0.
	\end{equation}
	In particular,
	\begin{equation}
		c_{1}(T^{1, 0} Y)^{\lfloor 3 n / 4 \rfloor}
		\notin \Im \rbra*{[\omega] \cup \colon H^{2 \lfloor 3 n / 4 \rfloor - 2}(Y; \bbR)
			\longrightarrow H^{2 \lfloor 3 n / 4 \rfloor}(Y; \bbR)}.
	\end{equation}
\end{corollary}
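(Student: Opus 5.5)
The plan is to specialize \cref{thm:higher-Chern-violates-hard-Lefschetz} to $m = \lceil n/4 \rceil$ and to the multi-index consisting entirely of ones. First we check that this $m$ is admissible, i.e.\ $\lceil n/4 \rceil \leq m \leq \lfloor n/2 \rfloor$. For $n \geq 2$ we have $\lceil n/4 \rceil \leq \lfloor n/2 \rfloor$: writing $n = 4a + b$ with $0 \leq b \leq 3$, the left side is $a + \lceil b/4 \rceil$ and the right side is $2a + \lfloor b/2 \rfloor$. The cases $a = 0$, $b \in \{2, 3\}$ give $1 \leq 1$. For $a \geq 1$ the inequality is immediate, since $a + 1 \leq 2a$.

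Next we record the identity $n - \lceil n/4 \rceil = \lfloor 3n/4 \rfloor$, which holds because $n - \lceil n/4 \rceil = \lfloor n - n/4 \rfloor$. We then take $k = n - m = \lfloor 3n/4 \rfloor \geq 1$ and $r_1 = \dots = r_k = 1$. This multi-index lies in $\{1, \dots, m\}^k$ since $m \geq 1$, and it has total weight $\abs{r} = n - m$. Consequently the extra power $c_1(T^{1,0}Y)^{n - m - \abs{r}}$ in \cref{thm:higher-Chern-violates-hard-Lefschetz} is trivial.

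Applying that theorem gives $(Y, \omega)$ of signature $(\lfloor n/2 \rfloor, \lceil n/2 \rceil)$ and a closed real $(m, m)$-form $\alpha$, with $[\omega] \in H^2(Y; \bbZ)$, $[\omega] \cup [\alpha] = 0$, and
\begin{equation}
	\int_Y [\alpha] \cup c_1(T^{1,0}Y)^{\lfloor 3n/4 \rfloor} \neq 0 .
\end{equation}

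For the final assertion we argue as in the proof of \cref{lem:chern-non-vanishing-criterion}. Suppose $c_1(T^{1,0}Y)^{\lfloor 3n/4 \rfloor} = [\omega] \cup \gamma'$ for some $\gamma' \in H^{2\lfloor 3n/4 \rfloor - 2}(Y; \bbR)$. Pairing with $[\alpha]$ would give zero, because $[\omega] \cup [\alpha] = 0$. This contradicts the nonvanishing of the integral above.

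There is no real obstacle here. The only care needed is the arithmetic of floors and ceilings, namely admissibility of $m$ for small $n$ and the identity $n - \lceil n/4 \rceil = \lfloor 3n/4 \rfloor$.
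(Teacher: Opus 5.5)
Your proposal is correct and matches the paper's proof: specialize \cref{thm:higher-Chern-violates-hard-Lefschetz} to $m = \lceil n/4 \rceil$ and $r_{1} = \dots = r_{k} = 1$ with $k = n - m = \lfloor 3n/4 \rfloor$, then get the non-image statement by pairing with $[\alpha]$ and using $[\omega] \cup [\alpha] = 0$. Your checks that $\lceil n/4 \rceil \leq \lfloor n/2 \rfloor$ and that $n - \lceil n/4 \rceil = \lfloor 3n/4 \rfloor$ are correct; the paper leaves them implicit.
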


\begin{proof}
	Apply \cref{thm:higher-Chern-violates-hard-Lefschetz}
	with $m = \lceil n / 4 \rceil$ and $r_{1} = \dots = r_{k} = 1$,
	where $k = n - m = \lfloor 3 n / 4 \rfloor$.
	The last assertion follows from $[\omega] \cup [\alpha] = 0$
	and the non-zero pairing with $[\alpha]$.
\end{proof}

\begin{remark}
	For a closed symplectic manifold $(Y, \omega)$ of real dimension $2 n$,
	let $L_{\omega}$ denote cup product with $[\omega]$.
	The pair $(Y, \omega)$ is said to satisfy the \emph{hard Lefschetz property} if
	\begin{equation}
		L_{\omega}^{n - \ell} \colon H^{\ell}(Y; \bbR) \longrightarrow H^{2 n - \ell}(Y; \bbR)
	\end{equation}
	is an isomorphism for every integer $0 \leq \ell \leq n$.
	If $\omega$ is a \Kahler form,
	$(Y, \omega)$ satisfies the hard Lefschetz property.
	However,
	the hard Lefschetz property need not hold for pseudo-\Kahler manifolds;
	see \cite{Yamada2005}*{Example 5.2} for example.
	Our construction gives integral pseudo-\Kahler manifolds
	that fail the hard Lefschetz property in every complex dimension $n \geq 3$.
	In these examples,
	the failure is detected by Chern monomials
	lying outside the image of cup product with $[\omega]$ as follows.
	
	Suppose that $2 m < n$ in \cref{thm:higher-Chern-violates-hard-Lefschetz}.
	For every multi-index $r = (r_{1}, \dots , r_{k}) \in \{1, \dots , m\}^{k}$ of total weight $n - m$,
	the non-zero pairing with $[\alpha]$ and the relation $L_{\omega} [\alpha] = 0$ imply
	\begin{equation}
		c_{r_{1}}(T^{1, 0} Y) \cup \dotsm \cup c_{r_{k}}(T^{1, 0} Y)
		\notin \Im \rbra*{L_{\omega} \colon H^{2 (n - m) - 2}(Y; \bbR) \longrightarrow H^{2 (n - m)}(Y; \bbR)}.
	\end{equation}
	Since $2 (n - m) > n$, the hard Lefschetz property would force this map to be surjective:
	the map $L_{\omega}^{n - 2 m} \colon H^{2 m}(Y; \bbR) \longrightarrow H^{2 (n - m)}(Y; \bbR)$
	factors through it.
	In particular, taking $m = \lceil n / 4 \rceil$ gives examples failing the hard Lefschetz property
	in every complex dimension $n \geq 3$,
	with the failure detected by $c_{1}(T^{1, 0} Y)^{\lfloor 3 n / 4 \rfloor}$.
\end{remark}

\begin{remark}
	For $n = 2, 3$,
	the circle-bundle construction above cannot yield a counterexample
	to the vanishing theorem for Chern monomials.
	Indeed,
	the condition $2 \abs{r} \geq n + 2$ implies $\abs{r} \geq n$.
	Since $[\omega]^{n} \neq 0$,
	the Lefschetz map
	\begin{equation}
		[\omega] \cup \colon H^{2 n - 2}(Y; \bbR) \longrightarrow H^{2 n}(Y; \bbR)
	\end{equation}
	is surjective.
	The Gysin exact sequence therefore implies that
	\begin{equation}
		p^{\ast} \colon H^{2 n}(Y; \bbR) \longrightarrow H^{2 n}(S; \bbR)
	\end{equation}
	is zero.
	Thus Chern monomials of weight $\abs{r} = n$ pull back to zero on $S$,
	while those of weight $\abs{r} > n$ vanish on $Y$ for dimensional reasons.
	
	For $n = 5$,
	the construction above yields $c_{1}(T^{1, 0} S)^{3} \neq 0$,
	whereas a counterexample to the vanishing theorem would require
	a non-zero Chern monomial of weight at least $4$.
	Hence the case $n = 5$ is not settled by the present construction.
\end{remark}

\section*{Acknowledgements}
The author is grateful to Shuho Kanda for helpful comments.

\section*{Declaration of generative AI use}
During the preparation of this manuscript,
the author used ChatGPT and Codex (OpenAI) to assist with English-language editing,
manuscript organization, literature searches, and the development and review of mathematical arguments.
The author critically evaluated and revised all AI-assisted material incorporated into the manuscript,
independently verified the mathematical statements, proofs, and cited sources,
and takes full responsibility for the accuracy and integrity of the final manuscript.

\bibliography{my-reference,my-reference-preprint}

\end{document}